\documentclass{amsart}
\usepackage{amsmath}
\usepackage{amsfonts}

\setcounter{MaxMatrixCols}{10}

\newtheorem{theorem}{Theorem}
\theoremstyle{plain}

\newtheorem{corollary}{Corollary}

\numberwithin{equation}{section}
\input{tcilatex}

\begin{document}
\title[Ostrowski-type inequality]{On the Ostrowski type integral inequality
for double integrals}
\author{Mehmet Zeki SARIKAYA}
\address{Department of Mathematics, \ Faculty of Science and Arts, D\"{u}zce
University, D\"{u}zce-TURKEY}
\email{sarikayamz@gmail.com, sarikaya@aku.edu.tr}
\subjclass[2000]{ 26D07, 26D15}
\keywords{Ostrowski's inequality.}

\begin{abstract}
In this note, we establish new an inequality of Ostrowski-type for double
integrals involving functions of two independent variables by using fairly
elementary analysis.
\end{abstract}

\maketitle

\section{Introduction}

In 1938, the classical integral inequality established by Ostrowski \cite%
{Ostrowski} as follows:

\begin{theorem}
\label{z1} Let $f:[a,b]\mathbb{\rightarrow R}$ be a differentiable mapping
on $(a,b)$ whose derivative $f^{^{\prime }}:(a,b)\mathbb{\rightarrow R}$ is
bounded on $(a,b),$ i.e., $\left\Vert f^{\prime }\right\Vert _{\infty }=%
\underset{t\in (a,b)}{\overset{}{\sup }}\left\vert f^{\prime }(t)\right\vert
<\infty .$ Then we have the inequality:%
\begin{equation}
\left\vert f(x)-\frac{1}{b-a}\int\limits_{a}^{b}f(t)dt\right\vert \leq \left[
\frac{1}{4}+\frac{(x-\frac{a+b}{2})^{2}}{(b-a)^{2}}\right] (b-a)\left\Vert
f^{\prime }\right\Vert _{\infty }  \label{1}
\end{equation}%
for all$\ x\in \lbrack a,b].$ The constant $\frac{1}{4}$ is the best
possible.
\end{theorem}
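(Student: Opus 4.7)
My plan is to follow the standard Peano-kernel approach. First, I would introduce the kernel
\[
K(x,t) = \begin{cases} t-a, & t \in [a,x], \\ t-b, & t \in (x,b], \end{cases}
\]
and establish the Montgomery-type identity
\[
(b-a)f(x) - \int_a^b f(t)\,dt \;=\; \int_a^b K(x,t)\,f'(t)\,dt.
\]
This comes from splitting the right-hand integral at $t=x$ and integrating by parts on each piece: $(t-a)f(t)\big|_a^x$ and $(t-b)f(t)\big|_x^b$ both contribute $f(x)$ pieces that combine to $(b-a)f(x)$, while the remaining $-\int f$ terms merge into $-\int_a^b f(t)\,dt$.

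Next, I would take absolute values, invoke $|f'(t)|\le\|f'\|_\infty$, and pull the supremum outside:
\[
\left|(b-a)f(x) - \int_a^b f(t)\,dt\right| \;\le\; \|f'\|_\infty \int_a^b |K(x,t)|\,dt.
\]
A direct computation on the two pieces gives $\int_a^b |K(x,t)|\,dt = \tfrac12\bigl[(x-a)^2 + (b-x)^2\bigr]$.

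Finally, I would rewrite the bracket by the substitution $x-a = \tfrac{b-a}{2} + u$, $b-x = \tfrac{b-a}{2}-u$ with $u = x-\tfrac{a+b}{2}$, which yields
\[
(x-a)^2 + (b-x)^2 = \tfrac{(b-a)^2}{2} + 2u^2.
\]
Dividing through by $(b-a)$ and factoring $(b-a)$ out of the bracket reproduces exactly \eqref{1}. The easy steps are the integration by parts and the two elementary integrals; the only mildly delicate bit is this last algebraic rearrangement into the prescribed form. For sharpness of the constant $\tfrac14$, I would test at $x=a$ with the linear function $f(t)=t-a$: then $|f(a)-\tfrac{1}{b-a}\int_a^b f| = \tfrac{b-a}{2}$ and $\|f'\|_\infty = 1$, matching the right-hand side $\bigl[\tfrac14+\tfrac14\bigr](b-a)$ exactly, so $\tfrac14$ cannot be decreased.
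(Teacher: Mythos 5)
Your proof is correct, but there is an important point of comparison to make: the paper does not prove this statement at all. Theorem \ref{z1} is the classical 1938 Ostrowski inequality, quoted in the introduction with a citation to \cite{Ostrowski} as motivation for the paper's main result, so there is no in-paper proof to measure your argument against. That said, your Peano-kernel route --- the kernel $K(x,t)$ equal to $t-a$ on $[a,x]$ and $t-b$ on $(x,b]$, integration by parts to obtain the Montgomery identity, the computation $\int_a^b\left\vert K(x,t)\right\vert dt=\frac{1}{2}\left[ (x-a)^{2}+(b-x)^{2}\right]$, and the algebraic rewriting via $u=x-\frac{a+b}{2}$ --- is the standard proof, and it is precisely the one-dimensional template of the method the paper itself uses for Theorem \ref{z}: there the author introduces the analogous piecewise kernels $p(a,b,\alpha _{1},\beta _{1},t)$ and $q(c,d,\alpha _{2},\beta _{2},s)$, integrates by parts over each subrectangle to get the identity (\ref{10}), passes to absolute values, and evaluates the $L^{1}$ norms of the kernels in (\ref{13}) and (\ref{14}) exactly as you do. All your computations check out, including the sharpness argument: testing $f(t)=t-a$ at $x=a$ gives left-hand side $\frac{b-a}{2}$, and replacing $\frac{1}{4}$ by any $c<\frac{1}{4}$ would make the right-hand side $\left( c+\frac{1}{4}\right) (b-a)$ strictly smaller, so the constant cannot be improved; this cleanly avoids the nonsmoothness issue one meets when testing $f(t)=\left\vert t-\frac{a+b}{2}\right\vert$ at the midpoint. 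One small remark for completeness: the integration by parts is legitimate under the stated hypotheses because the bounded derivative makes $f$ Lipschitz on $(a,b)$, hence absolutely continuous on $[a,b]$.
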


In a recent paper \cite{Barnett}, Barnett and Dragomir proved the following
Ostrowski type inequality for double integrals

\begin{theorem}
Let $f:[a,b]\times \lbrack c,d]\mathbb{\rightarrow R}$ be continuous on $%
[a,b]\times \lbrack c,d],$ $f_{x,y}^{\prime \prime }=\frac{\partial ^{2}f}{%
\partial x\partial y}$ exists on $(a,b)\times (c,d)$ and is bounded$,$ i.e., 
$\left\Vert f_{x,y}^{\prime \prime }\right\Vert _{\infty }=\underset{%
(x,y)\in (a,b)\times (c,d)}{\overset{}{\sup }}\left\vert \frac{\partial
^{2}f(x,y)}{\partial x\partial y}\right\vert <\infty .$ Then, we have the
inequality:%
\begin{eqnarray}
&&\left\vert
\int\limits_{a}^{b}\int\limits_{c}^{d}f(s,t)dtds-(d-c)(b-a)f(x,y)\right. 
\notag \\
&&  \notag \\
&&\ \ \ \ \ \ \ \ \ \ \ \ \ \ \ \ \ \ \ \ \ \ \ \ \ \ \ \left. -\left[
(b-a)\int\limits_{c}^{d}f(x,t)dt+(d-c)\int\limits_{a}^{b}f(s,y)ds\right]
\right\vert  \label{2} \\
&&  \notag \\
&\leq &\left[ \frac{1}{4}(b-a)^{2}+(x-\frac{a+b}{2})^{2}\right] \left[ \frac{%
1}{4}(d-c)^{2}+(y-\frac{d+c}{2})^{2}\right] \left\Vert f_{x,y}^{\prime
\prime }\right\Vert _{\infty }  \notag
\end{eqnarray}%
for all $(x,y)\in \lbrack a,b]\times \lbrack c,d].$
\end{theorem}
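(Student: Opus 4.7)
The natural approach is to establish a two-dimensional Montgomery-type identity that expresses the quantity inside the absolute value in \eqref{2} as a double integral of $f''_{xy}$ against a product Peano kernel, and then to bound that integral crudely using $\|f''_{xy}\|_{\infty}$.

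Concretely, I would introduce the standard one-dimensional Peano kernels
\[
p(x,s)=\begin{cases}s-a, & a\le s\le x,\\ s-b, & x<s\le b,\end{cases}\qquad q(y,t)=\begin{cases}t-c, & c\le t\le y,\\ t-d, & y<t\le d,\end{cases}
\]
and aim to prove the identity
\[
\int_{a}^{b}\!\!\int_{c}^{d}p(x,s)\,q(y,t)\,\frac{\partial^{2}f(s,t)}{\partial s\,\partial t}\,dt\,ds
\]
reproduces (up to a sign) the expression inside the modulus on the left of \eqref{2}. The derivation proceeds by iterated integration by parts. For fixed $s$, split $\int_{c}^{d}q(y,t)f''_{st}\,dt$ at $t=y$ and integrate by parts on each subinterval; because $q(y,\cdot)$ vanishes at $c$ and $d$, the boundary contributions reduce to $(d-c)f'_{s}(s,y)-\int_{c}^{d}f'_{s}(s,t)\,dt$. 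Next, multiply by $p(x,s)$, integrate in $s$, split at $s=x$, and integrate by parts again; here too the kernel $p(x,\cdot)$ vanishes at $a$ and $b$. The four resulting boundary evaluations, coming from $s\in\{a,x,b\}$ and $t\in\{c,y,d\}$, assemble precisely into the terms $(b-a)(d-c)f(x,y)$, $(b-a)\int_{c}^{d}f(x,t)\,dt$, $(d-c)\int_{a}^{b}f(s,y)\,ds$, and $\int_{a}^{b}\!\int_{c}^{d}f(s,t)\,dt\,ds$.

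With the identity in hand, the inequality is immediate from
\[
\left|\int_{a}^{b}\!\!\int_{c}^{d}p(x,s)q(y,t)f''_{st}\,dt\,ds\right|\le \|f''_{xy}\|_{\infty}\!\left(\int_{a}^{b}|p(x,s)|\,ds\right)\!\left(\int_{c}^{d}|q(y,t)|\,dt\right),
\]
combined with the elementary evaluation
\[
\int_{a}^{b}|p(x,s)|\,ds=\frac{(x-a)^{2}+(b-x)^{2}}{2}=\frac{1}{4}(b-a)^{2}+\Bigl(x-\tfrac{a+b}{2}\Bigr)^{\!2},
\]
and the analogous identity on $[c,d]$; the product of these two factors is exactly the bracketed quantity multiplying $\|f''_{xy}\|_{\infty}$ in \eqref{2}.

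The principal technical hurdle is careful bookkeeping in the double integration by parts: both kernels are piecewise linear with a seam at the chosen point, so one must verify that the boundary terms at $a,b,c,d$ genuinely cancel while the interior contributions at $s=x$ and $t=y$ combine with the correct signs. Continuity of $f$ on $[a,b]\times[c,d]$ and boundedness of $f''_{xy}$ on the open rectangle are enough to legitimise Fubini's theorem and each stage of the integration by parts; no further regularity is needed.
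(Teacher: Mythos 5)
Your route is exactly the one the paper attributes to Barnett and Dragomir: the paper itself quotes this theorem without proof, remarking only that in the cited reference the inequality (\ref{2}) ``is established by the use of integral identity involving Peano kernels,'' and your product-kernel Montgomery identity, followed by the $L^\infty$ bound and the evaluation $\int_a^b|p(x,s)|\,ds=\frac{(x-a)^2+(b-x)^2}{2}=\frac14(b-a)^2+\bigl(x-\frac{a+b}{2}\bigr)^2$, is precisely that argument. The kernel computations, the cancellation of boundary terms at $a,b,c,d$, and the jump contributions at $s=x$, $t=y$ all check out. (Incidentally, this same machinery, with the seam moved to the midpoints and kernels $t-\alpha_1$, $t-\beta_1$, is what the paper's own Theorem \ref{z} runs for its main result.)

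One point does need repair: your hedge ``up to a sign'' is not accurate, and it hides a real discrepancy with the statement as printed. Carrying out your two integrations by parts gives exactly
\begin{align*}
\int_a^b\!\!\int_c^d p(x,s)\,q(y,t)\,\frac{\partial^2 f(s,t)}{\partial s\,\partial t}\,dt\,ds ={}& (b-a)(d-c)f(x,y)-(b-a)\int_c^d f(x,t)\,dt\\
&-(d-c)\int_a^b f(s,y)\,ds+\int_a^b\!\!\int_c^d f(s,t)\,dt\,ds,
\end{align*}
in which the $f(x,y)$ term carries a \emph{plus} sign, whereas (\ref{2}) as transcribed in this paper has $-(d-c)(b-a)f(x,y)$; since the other three terms agree in sign, this is not an overall sign flip, so your identity does not match the printed statement even up to sign. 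The fault lies with the transcription, not with you: taking $f(s,t)=st$ on $[0,1]\times[0,1]$ and $x=y=\tfrac12$, the left side of (\ref{2}) as printed equals $\tfrac12$ while the right side equals $\tfrac1{16}$, so the minus-sign version is false; the original Barnett--Dragomir inequality has $+(b-a)(d-c)f(x,y)$, and that is exactly what your identity proves. So state the identity precisely rather than gesturing at signs, and note the correction to (\ref{2}). A minor further caveat: continuity of $f$ plus boundedness of $f''_{xy}$ does not by itself license the inner integration by parts --- you need $\partial f/\partial s$ to exist with the appropriate absolute continuity --- though this looseness is inherited from the hypotheses as stated in the literature rather than introduced by your argument.
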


In \cite{Barnett}, the inequality (\ref{2}) is established by the use of
integral identity involving Peano kernels. In \cite{Pachpatte1}, Pachpatte
obtained an inequality in the view (\ref{2}) by using elementary analysis.
The interested reader is also refered to (\cite{Barnett}, \cite{Dragomir}, 
\cite{Pachpatte}-\cite{Ujevic}) for Ostrowski type inequalities \ in several
independent variables.

The main aim of this note is to establish a new Ostrowski type inequality
for double integrals involving functions of two independent variables and
their partial derivatives.

\section{Main Result}

\begin{theorem}
\label{z} Let $f:[a,b]\times \lbrack c,d]\mathbb{\rightarrow R}$ be an
absolutely continuous fuction such that the partial derivative of order $2$
exist and is bounded, i.e.,%
\begin{equation*}
\left\Vert \frac{\partial ^{2}f(t,s)}{\partial t\partial s}\right\Vert
_{\infty }=\underset{(x,y)\in (a,b)\times (c,d)}{\overset{}{\sup }}%
\left\vert \frac{\partial ^{2}f(t,s)}{\partial t\partial s}\right\vert
<\infty
\end{equation*}%
for all $(t,s)\in \lbrack a,b]\times \lbrack c,d].$ Then, we have%
\begin{equation}
\begin{array}{l}
\left\vert (\beta _{1}-\alpha _{1})(\beta _{2}-\alpha _{2})f(\frac{a+b}{2},%
\frac{c+d}{2})+H(\alpha _{1},\alpha _{2},\beta _{1},\beta _{2})+G(\alpha
_{1},\alpha _{2},\beta _{1},\beta _{2})\right. \\ 
\\ 
-(\beta _{2}-\alpha _{2})\dint\limits_{a}^{b}f(t,\frac{c+d}{2})dt-(\beta
_{1}-\alpha _{1})\dint\limits_{c}^{d}f(\frac{a+b}{2},s)ds \\ 
\\ 
-\dint\limits_{a}^{b}[(\alpha _{2}-c)f(t,c)+(d-\beta
_{2})f(t,d)]dt-\dint\limits_{c}^{d}[(\alpha _{1}-a)f(a,s)+(b-\beta
_{1})f(b,s)]ds \\ 
\\ 
\left. +\dint\limits_{a}^{b}\dint\limits_{c}^{d}f(t,s)dsdt\right\vert \leq 
\left[ \dfrac{(\alpha _{1}-a)^{2}+(b-\beta _{1})^{2}}{2}+\dfrac{(a+b-2\alpha
_{1})^{2}+(a+b-2\beta _{1})^{2}}{8}\right] \\ 
\\ 
\times \left[ \dfrac{(\alpha _{2}-c)^{2}+(d-\beta _{2})^{2}}{2}+\dfrac{%
(c+d-2\alpha _{2})^{2}+(c+d-2\beta _{2})^{2}}{8}\right] \left\Vert \dfrac{%
\partial ^{2}f(t,s)}{\partial t\partial s}\right\Vert _{\infty }%
\end{array}
\label{4}
\end{equation}%
for all $(\alpha _{1},\alpha _{2}),(\beta _{1},\beta _{2})\in \lbrack
a,b]\times \lbrack c,d]$ with $\alpha _{1}<\beta _{1},\ \alpha _{2}<\beta
_{2}\ $where%
\begin{equation}
\begin{array}{l}
H(\alpha _{1},\alpha _{2},\beta _{1},\beta _{2})=(\alpha _{1}-a)[(\alpha
_{2}-c)f(a,c)+(d-\beta _{2})f(a,d)] \\ 
\\ 
+(b-\beta _{1})[(\alpha _{2}-c)f(b,c)+(d-\beta _{2})f(b,d)]%
\end{array}
\label{a}
\end{equation}%
and%
\begin{equation}
\begin{array}{l}
G(\alpha _{1},\alpha _{2},\beta _{1},\beta _{2})=(\beta _{1}-\alpha _{1}) 
\left[ (\alpha _{2}-c)f(\frac{a+b}{2},c)+(d-\beta _{2})f(\frac{a+b}{2},d)%
\right] \\ 
\\ 
+(\beta _{2}-\alpha _{2})\left[ (\alpha _{1}-a)f(a,\frac{c+d}{2})+(b-\beta
_{1})f(b,\frac{c+d}{2})\right] .%
\end{array}
\label{b}
\end{equation}
\end{theorem}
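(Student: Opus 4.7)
The plan is to express the quantity inside the absolute value on the left-hand side of (\ref{4}) as an exact Peano-type identity
\[
\int_a^b\!\!\int_c^d K(t)\,L(s)\,\frac{\partial^2 f(t,s)}{\partial t\,\partial s}\,ds\,dt,
\]
with the two-piece kernels
\[
K(t)=\begin{cases} t-\alpha_1, & t\in[a,\tfrac{a+b}{2}], \\ t-\beta_1, & t\in(\tfrac{a+b}{2},b], \end{cases}\qquad L(s)=\begin{cases} s-\alpha_2, & s\in[c,\tfrac{c+d}{2}], \\ s-\beta_2, & s\in(\tfrac{c+d}{2},d], \end{cases}
\]
and then bound this double integral by pulling $\|f_{ts}''\|_\infty$ out and factorising $\int|K|\,dt\cdot\int|L|\,ds$.

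The first step is the one-variable identity, obtained by integrating by parts on each of the two subintervals separated by the midpoint:
\[
\int_a^b K(t)\,g'(t)\,dt = (\beta_1-\alpha_1)\,g\!\left(\tfrac{a+b}{2}\right) + (\alpha_1-a)\,g(a) + (b-\beta_1)\,g(b) - \int_a^b g(t)\,dt
\]
for any absolutely continuous $g$ on $[a,b]$. I would apply this with $g(t)=\partial f(t,s)/\partial s$ (treating $s$ as a parameter), multiply through by $L(s)$, and integrate over $[c,d]$. Then I would apply the analogue of the identity in the $s$-variable to each of the four resulting $s$-integrals, invoking Fubini once to swap orders in the term arising from $\int_a^b f_s(t,s)\,dt$. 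Collecting the resulting terms into their natural groupings recovers precisely the left-hand side of (\ref{4}): the single value $(\beta_1-\alpha_1)(\beta_2-\alpha_2)f(\tfrac{a+b}{2},\tfrac{c+d}{2})$, the four corner contributions forming $H$ as in (\ref{a}), the four edge-midpoint contributions forming $G$ as in (\ref{b}), the two single-variable integrals at the midpoints, the two boundary integrals over $\{a,b\}$ and $\{c,d\}$, and finally the double integral of $f$.

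With the identity in hand, the bound follows immediately: taking absolute values gives
\[
\text{LHS of }(\ref{4}) \le \|f_{ts}''\|_\infty \cdot \int_a^b |K(t)|\,dt \cdot \int_c^d |L(s)|\,ds.
\]
Under the natural positioning $\alpha_1\le\tfrac{a+b}{2}\le\beta_1$ and $\alpha_2\le\tfrac{c+d}{2}\le\beta_2$ (tacit in the symmetric form of the right-hand side), splitting $|K|$ at $\alpha_1$ on $[a,\tfrac{a+b}{2}]$ and at $\beta_1$ on $[\tfrac{a+b}{2},b]$ yields
\[
\int_a^b|K(t)|\,dt=\frac{(\alpha_1-a)^2+(b-\beta_1)^2}{2}+\frac{(a+b-2\alpha_1)^2+(a+b-2\beta_1)^2}{8},
\]
with the analogous expression for $\int_c^d|L(s)|\,ds$; their product is exactly the bracket factor multiplying $\|f_{ts}''\|_\infty$ in (\ref{4}).

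The main obstacle is not analytic but combinatorial: the iterated integration by parts generates on the order of a dozen terms of four different types (double midpoint, corners, edge midpoints, boundary and midpoint integrals), and one must verify that they regroup with the correct signs to produce $H$, $G$, and the remaining pieces as displayed. The hypotheses of absolute continuity and boundedness of $\partial^2 f/\partial t\,\partial s$ are more than sufficient to justify every integration by parts and the single use of Fubini, so no delicate regularity issue arises.
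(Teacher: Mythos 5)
Your proposal is correct and takes essentially the same approach as the paper: identical kernels (the paper's $p(a,b,\alpha_1,\beta_1,t)$ and $q(c,d,\alpha_2,\beta_2,s)$ are your $K$ and $L$), the same exact identity (\ref{10}) --- which the paper derives via four quadrant-wise double integrations by parts (\ref{6})--(\ref{9}) where you instead tensorize a single one-variable integration-by-parts lemma, a purely organizational difference --- and the same factorized bound $\left\Vert \frac{\partial^2 f}{\partial t\partial s}\right\Vert_\infty \int_a^b \left\vert p\right\vert dt \int_c^d \left\vert q\right\vert ds$ with the two integrals computed exactly as in (\ref{13}) and (\ref{14}). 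Your explicit remark that evaluating $\int_a^b \left\vert K(t)\right\vert dt$ requires the positioning $\alpha_1 \leq \frac{a+b}{2} \leq \beta_1$ (and its $s$-analogue) correctly identifies an assumption the paper uses tacitly when splitting the integrals in (\ref{13}).
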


\begin{proof}
We define the following functions:%
\begin{equation*}
\begin{array}{ccc}
p(a,b,\alpha _{1},\beta _{1},t) & = & \left\{ 
\begin{array}{lll}
t-\alpha _{1}, & t\in \lbrack a,\frac{a+b}{2}] &  \\ 
&  &  \\ 
t-\beta _{1}, & t\in (\frac{a+b}{2},b] & 
\end{array}%
\right.%
\end{array}%
\end{equation*}%
and%
\begin{equation*}
\begin{array}{ccc}
q(c,d,\alpha _{2},\beta _{2},s) & = & \left\{ 
\begin{array}{lll}
s-\alpha _{2}, & s\in \lbrack c,\frac{c+d}{2}] &  \\ 
&  &  \\ 
s-\beta _{2}, & s\in (\frac{c+d}{2},d] & 
\end{array}%
\right.%
\end{array}%
\end{equation*}%
for all $(\alpha _{1},\alpha _{2}),(\beta _{1},\beta _{2})\in \lbrack
a,b]\times \lbrack c,d]\ $with $\alpha _{1}<\beta _{1},\ \alpha _{2}<\beta
_{2}.$ Thus, by definitions of $p(a,b,\alpha _{1},\beta _{1},t)$ and $%
q(c,d,\alpha _{2},\beta _{2},s),$ we have%
\begin{equation}
\begin{array}{l}
\dint\limits_{a}^{b}\dint\limits_{c}^{d}p(a,b,\alpha _{1},\beta
_{1},t)q(c,d,\alpha _{2},\beta _{2},s)\dfrac{\partial ^{2}f(t,s)}{\partial
t\partial s}dsdt=\dint\limits_{a}^{\frac{a+b}{2}}\dint\limits_{c}^{\frac{c+d%
}{2}}(t-\alpha _{1})(s-\alpha _{2})\dfrac{\partial ^{2}f(t,s)}{\partial
t\partial s}dsdt \\ 
\\ 
+\dint\limits_{a}^{\frac{a+b}{2}}\dint\limits_{\frac{c+d}{2}}^{d}(t-\alpha
_{1})(s-\beta _{2})\dfrac{\partial ^{2}f(t,s)}{\partial t\partial s}%
dsdt+\dint\limits_{\frac{a+b}{2}}^{b}\dint\limits_{c}^{\frac{c+d}{2}%
}(t-\beta _{1})(s-\alpha _{2})\dfrac{\partial ^{2}f(t,s)}{\partial t\partial
s}dsdt \\ 
\\ 
+\dint\limits_{\frac{a+b}{2}}^{b}\dint\limits_{\frac{c+d}{2}}^{d}(t-\beta
_{1})(s-\beta _{2})\dfrac{\partial ^{2}f(t,s)}{\partial t\partial s}dsdt.%
\end{array}
\label{5}
\end{equation}%
Integrating by parts, we can state:%
\begin{equation}
\begin{array}{l}
\dint\limits_{a}^{\frac{a+b}{2}}\dint\limits_{c}^{\frac{c+d}{2}}(t-\alpha
_{1})(s-\alpha _{2})\dfrac{\partial ^{2}f(t,s)}{\partial t\partial s}dsdt=%
\dfrac{(a+b-2\alpha _{1})(c+d-2\alpha _{2})}{4}f(\frac{a+b}{2},\frac{c+d}{2}%
)+\dint\limits_{a}^{\frac{a+b}{2}}\dint\limits_{c}^{\frac{c+d}{2}}f(t,s)dsdt
\\ 
\\ 
-\dfrac{(a-\alpha _{1})(c+d-2\alpha _{2})}{2}f(a,\frac{c+d}{2})-\dfrac{%
(a+b-2\alpha _{1})(c-\alpha _{2})}{2}f(\frac{a+b}{2},c)+(a-\alpha
_{1})(c-\alpha _{2})f(a,c) \\ 
\\ 
-\dint\limits_{a}^{\frac{a+b}{2}}[\dfrac{(c+d-2\alpha _{2})}{2}f(t,\frac{c+d%
}{2})-(c-\alpha _{2})f(t,c)]dt-\dint\limits_{c}^{\frac{c+d}{2}}[\dfrac{%
(a+b-2\alpha _{1})}{2}f(\frac{a+b}{2},s)-(a-\alpha _{1})f(a,s)]ds.%
\end{array}
\label{6}
\end{equation}%
\begin{equation}
\begin{array}{l}
\dint\limits_{a}^{\frac{a+b}{2}}\dint\limits_{\frac{c+d}{2}}^{d}(t-\alpha
_{1})(s-\beta _{2})\dfrac{\partial ^{2}f(t,s)}{\partial t\partial s}dsdt=-%
\dfrac{(a+b-2\alpha _{1})(c+d-2\beta _{2})}{4}f(\frac{a+b}{2},\frac{c+d}{2}%
)+\dint\limits_{a}^{\frac{a+b}{2}}\dint\limits_{\frac{c+d}{2}}^{d}f(t,s)dsdt
\\ 
\\ 
+\dfrac{(a-\alpha _{1})(c+d-2\beta _{2})}{2}f(a,\frac{c+d}{2})+\dfrac{%
(a+b-2\alpha _{1})(d-\beta _{2})}{2}f(\frac{a+b}{2},d)+(a-\alpha
_{1})(d-\beta _{2})f(a,d) \\ 
\\ 
+\dint\limits_{a}^{\frac{a+b}{2}}[\dfrac{(c+d-2\beta _{2})}{2}f(t,\frac{c+d}{%
2})-(d-\beta _{2})f(t,d)]dt-\dint\limits_{\frac{c+d}{2}}^{d}[\dfrac{%
(a+b-2\alpha _{1})}{2}f(\frac{a+b}{2},s)-(a-\alpha _{1})f(a,s)]ds.%
\end{array}
\label{7}
\end{equation}%
\begin{equation}
\begin{array}{l}
\dint\limits_{\frac{a+b}{2}}^{b}\dint\limits_{c}^{\frac{c+d}{2}}(t-\beta
_{1})(s-\alpha _{2})\dfrac{\partial ^{2}f(t,s)}{\partial t\partial s}dsdt=-%
\dfrac{(a+b-2\beta _{1})(c+d-2\alpha _{2})}{4}f(\frac{a+b}{2},\frac{c+d}{2}%
)+\dint\limits_{\frac{a+b}{2}}^{b}\dint\limits_{c}^{\frac{c+d}{2}}f(t,s)dsdt
\\ 
\\ 
+\dfrac{(b-\beta _{1})(c+d-2\alpha _{2})}{2}f(b,\frac{c+d}{2})+\dfrac{%
(a+b-2\beta _{1})(c-\alpha _{2})}{2}f(\frac{a+b}{2},c)-(b-\beta
_{1})(c-\alpha _{2})f(b,c) \\ 
\\ 
-\dint\limits_{\frac{a+b}{2}}^{b}[\dfrac{(c+d-2\alpha _{2})}{2}f(t,\frac{c+d%
}{2})-(c-\alpha _{2})f(t,c)]dt+\dint\limits_{c}^{\frac{c+d}{2}}[\dfrac{%
(a+b-2\beta _{1})}{2}f(\frac{a+b}{2},s)-(b-\beta _{1})f(b,s)]ds.%
\end{array}
\label{8}
\end{equation}%
\begin{equation}
\begin{array}{l}
\dint\limits_{\frac{a+b}{2}}^{b}\dint\limits_{\frac{c+d}{2}}^{d}(t-\beta
_{1})(s-\beta _{2})\dfrac{\partial ^{2}f(t,s)}{\partial t\partial s}dsdt=%
\dfrac{(a+b-2\beta _{1})(c+d-2\beta _{2})}{4}f(\frac{a+b}{2},\frac{c+d}{2}%
)+\dint\limits_{\frac{a+b}{2}}^{b}\dint\limits_{\frac{c+d}{2}}^{d}f(t,s)dsdt
\\ 
\\ 
-\dfrac{(b-\beta _{1})(c+d-2\beta _{2})}{2}f(b,\frac{c+d}{2})-\dfrac{%
(a+b-2\beta _{1})(d-\beta _{2})}{2}f(\frac{a+b}{2},d)+(b-\beta _{1})(d-\beta
_{2})f(b,d) \\ 
\\ 
+\dint\limits_{\frac{a+b}{2}}^{b}[\dfrac{(c+d-2\beta _{2})}{2}f(t,\frac{c+d}{%
2})-(d-\beta _{2})f(t,d)]dt+\dint\limits_{\frac{c+d}{2}}^{d}[\dfrac{%
(a+b-2\beta _{1})}{2}f(\frac{a+b}{2},s)-(b-\beta _{1})f(b,s)]ds.%
\end{array}
\label{9}
\end{equation}%
Adding (\ref{6})-(\ref{9}) and rewriting, we easily deduce:%
\begin{equation}
\begin{array}{l}
\dint\limits_{a}^{b}\dint\limits_{c}^{d}p(a,b,\alpha _{1},\beta
_{1},t)q(c,d,\alpha _{2},\beta _{2},s)\dfrac{\partial ^{2}f(t,s)}{\partial
t\partial s}dsdt=(\beta _{1}-\alpha _{1})(\beta _{2}-\alpha _{2})f(\frac{a+b%
}{2},\frac{c+d}{2})+H(\alpha _{1},\alpha _{2},\beta _{1},\beta _{2}) \\ 
\\ 
+G(\alpha _{1},\alpha _{2},\beta _{1},\beta _{2})-(\beta _{2}-\alpha
_{2})\dint\limits_{a}^{b}f(t,\frac{c+d}{2})dt-(\beta _{1}-\alpha
_{1})\dint\limits_{c}^{d}f(\frac{a+b}{2},s)ds \\ 
\\ 
-\dint\limits_{a}^{b}[(\alpha _{2}-c)f(t,c)+(d-\beta
_{2})f(t,d)]dt-\dint\limits_{c}^{d}[(\alpha _{1}-a)f(a,s)+(b-\beta
_{1})f(b,s)]ds \\ 
\\ 
+\dint\limits_{a}^{b}\dint\limits_{c}^{d}f(t,s)dsdt%
\end{array}
\label{10}
\end{equation}%
where $H(\alpha _{1},\alpha _{2},\beta _{1},\beta _{2})$ and $G(\alpha
_{1},\alpha _{2},\beta _{1},\beta _{2})$ defined by (\ref{a}) and (\ref{b}),
respectively. Now, using the identitiy (\ref{10}), it follows that%
\begin{equation}
\begin{array}{l}
\left\vert (\beta _{1}-\alpha _{1})(\beta _{2}-\alpha _{2})f(\frac{a+b}{2},%
\frac{c+d}{2})+H(\alpha _{1},\alpha _{2},\beta _{1},\beta
_{2})+\dint\limits_{a}^{b}\dint\limits_{c}^{d}f(t,s)dsdt\right. \\ 
\\ 
+G(\alpha _{1},\alpha _{2},\beta _{1},\beta _{2})-(\beta _{2}-\alpha
_{2})\dint\limits_{a}^{b}f(t,\frac{a+b}{2})dt-(\beta _{1}-\alpha
_{1})\dint\limits_{c}^{d}f(x,\frac{c+d}{2})ds \\ 
\\ 
\left. -\dint\limits_{a}^{b}[(\alpha _{2}-c)f(t,c)+(d-\beta
_{2})f(t,d)]dt-\dint\limits_{c}^{d}[(\alpha _{1}-a)f(a,s)+(b-\beta
_{1})f(b,s)]ds\right\vert \\ 
\\ 
\leq \dint\limits_{a}^{b}\dint\limits_{c}^{d}\left\vert p(a,b,\alpha
_{1},\beta _{1},t)\right\vert \left\vert q(c,d,\alpha _{2},\beta
_{2},s)\right\vert \left\vert \dfrac{\partial ^{2}f(t,s)}{\partial t\partial
s}\right\vert dsdt \\ 
\\ 
\leq \left\Vert \dfrac{\partial ^{2}f(t,s)}{\partial t\partial s}\right\Vert
_{\infty }\dint\limits_{a}^{b}\dint\limits_{c}^{d}\left\vert p(a,b,\alpha
_{1},\beta _{1},t)\right\vert \left\vert q(c,d,\alpha _{2},\beta
_{2},s)\right\vert dsdt.%
\end{array}
\label{12}
\end{equation}%
On the other hand, we get%
\begin{eqnarray}
\dint\limits_{a}^{b}\left\vert p(a,b,\alpha _{1},\beta _{1},t)\right\vert dt
&=&\int\limits_{a}^{\frac{a+b}{2}}\left\vert t-\alpha _{1}\right\vert
dt+\dint\limits_{\frac{a+b}{2}}^{b}\left\vert t-\beta _{1}\right\vert dt 
\notag \\
&=&\int\limits_{a}^{\alpha _{1}}\left( \alpha _{1}-t\right)
dt+\int\limits_{\alpha _{1}}^{\frac{a+b}{2}}\left( t-\alpha _{1}\right)
dt+\dint\limits_{\frac{a+b}{2}}^{\beta _{1}}\left( \beta _{1}-t\right)
dt+\dint\limits_{\beta _{1}}^{b}\left( t-\beta _{1}\right) dt  \notag \\
&=&\frac{(\alpha _{1}-a)^{2}+(b-\beta _{1})^{2}}{2}+\frac{(a+b-2\alpha
_{1})^{2}+(a+b-2\beta _{1})^{2}}{8}  \label{13}
\end{eqnarray}%
and similarly,%
\begin{eqnarray}
\dint\limits_{c}^{d}\left\vert q(a,b,\alpha _{1},\beta _{1},t)\right\vert dt
&=&\int\limits_{c}^{\frac{c+d}{2}}\left\vert s-\alpha _{2}\right\vert
ds+\dint\limits_{\frac{c+d}{2}}^{d}\left\vert s-\beta _{2}\right\vert ds 
\notag \\
&=&\frac{(\alpha _{2}-c)^{2}+(d-\beta _{2})^{2}}{2}+\frac{(c+d-2\alpha
_{2})^{2}+(c+d-2\beta _{2})^{2}}{8}.  \label{14}
\end{eqnarray}%
Using (\ref{13}) and (\ref{14}) in (\ref{12}), we see that (\ref{4}) holds.
\end{proof}

\begin{corollary}
Under the assumptions of Theorem \ref{z}, we have%
\begin{equation}
\begin{array}{l}
\left\vert (b-a)(d-c)f(\dfrac{a+b}{2},\dfrac{c+d}{2})-(d-c)\dint%
\limits_{a}^{b}f(t,\dfrac{c+d}{2})dt-(b-a)\dint\limits_{c}^{d}f(\dfrac{a+b}{2%
},s)ds\right. \\ 
\\ 
\ \ \ \ \ \ \left.
+\dint\limits_{a}^{b}\dint\limits_{c}^{d}f(t,s)dsdt\right\vert \leq \dfrac{1%
}{16}\left\Vert \dfrac{\partial ^{2}f(t,s)}{\partial t\partial s}\right\Vert
_{\infty }(b-a)^{2}(d-c)^{2}.%
\end{array}
\label{15}
\end{equation}
\end{corollary}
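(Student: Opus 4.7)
The strategy is to derive the corollary as the specialization of Theorem \ref{z} obtained by choosing the free parameters to be the endpoints themselves, namely $\alpha_1 = a$, $\beta_1 = b$, $\alpha_2 = c$, $\beta_2 = d$. This choice trivially satisfies the admissibility conditions $\alpha_1 < \beta_1$ and $\alpha_2 < \beta_2$, so the inequality (\ref{4}) applies directly.

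With this substitution in hand, I would first observe that the two auxiliary quantities $H$ and $G$ both vanish, since every summand in the definitions (\ref{a}) and (\ref{b}) contains at least one of the factors $\alpha_1 - a$, $b - \beta_1$, $\alpha_2 - c$, or $d - \beta_2$, each of which is now zero. The same reasoning collapses the two single-integral boundary-trace terms $\int_a^b [(\alpha_2 - c)f(t,c) + (d-\beta_2)f(t,d)]\, dt$ and $\int_c^d [(\alpha_1 - a)f(a,s) + (b-\beta_1)f(b,s)]\, ds$ to zero, leaving on the left-hand side of (\ref{4}) precisely the four terms $(b-a)(d-c)f(\tfrac{a+b}{2},\tfrac{c+d}{2})$, $-(d-c)\int_a^b f(t,\tfrac{c+d}{2})\,dt$, $-(b-a)\int_c^d f(\tfrac{a+b}{2},s)\,ds$, and $\int_a^b\int_c^d f(t,s)\,ds\,dt$ that appear in (\ref{15}).

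For the right-hand side, I would evaluate the two bracketed factors of (\ref{4}) under the same substitution. Each reduces transparently: the squared endpoint-difference pieces drop out, while the midpoint-offset pieces simplify, e.g., $\frac{(a+b-2a)^2 + (a+b-2b)^2}{8} = \frac{(b-a)^2}{4}$ and, symmetrically, $\frac{(d-c)^2}{4}$. Multiplying these two and multiplying through by $\|\partial^2 f/\partial t\partial s\|_\infty$ yields the constant $\frac{1}{16}(b-a)^2(d-c)^2\|\partial^2 f/\partial t\partial s\|_\infty$, which is exactly the right-hand side of (\ref{15}).

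I do not foresee any substantive obstacle: once Theorem \ref{z} is available, the corollary is pure arithmetic substitution and cancellation. The only conceptual check is confirming that the extremal choice $(\alpha_1,\beta_1)=(a,b)$ and $(\alpha_2,\beta_2)=(c,d)$ lies in the admissible parameter range of Theorem \ref{z}, which it does whenever $a<b$ and $c<d$.
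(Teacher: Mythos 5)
Your proposal is correct and is exactly the paper's proof: the paper likewise obtains (\ref{15}) by setting $\alpha_1=a$, $\beta_1=b$, $\alpha_2=c$, $\beta_2=d$ in (\ref{4}). Your verification that $H$, $G$, and the boundary integrals vanish and that each bracketed factor reduces to $\frac{(b-a)^2}{4}$ and $\frac{(d-c)^2}{4}$ simply spells out the arithmetic the paper leaves implicit.
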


\begin{proof}
We choose $\alpha _{1}=a,\ \beta _{1}=b,\ \alpha _{2}=c$ and $\beta _{2}=d$
in (\ref{4}), then we see that (\ref{15}) holds.
\end{proof}

\begin{corollary}
Under the assumptions of Theorem \ref{z}, we have%
\begin{equation}
\begin{array}{l}
\left\vert \dfrac{(b-a)(d-c)}{4}\left[ f(a,c)+f(a,d)+f(b,c)+f(b,d)\right] -%
\dfrac{(d-c)}{2}\dint\limits_{a}^{b}\left[ f(t,c)+f(t,d)\right] dt\right. \\ 
\\ 
\ \ \ \ \ \ \left. -\dfrac{(b-a)}{2}\dint\limits_{c}^{d}\left[ f(a,s)+f(b,s)%
\right] ds+\dint\limits_{a}^{b}\dint\limits_{c}^{d}f(t,s)dsdt\right\vert \\ 
\\ 
\ \ \ \ \ \ \leq \dfrac{1}{16}\left\Vert \dfrac{\partial ^{2}f(t,s)}{%
\partial t\partial s}\right\Vert _{\infty }(b-a)^{2}(d-c)^{2}.%
\end{array}
\label{16}
\end{equation}
\end{corollary}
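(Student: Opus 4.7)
The plan is to apply Theorem \ref{z} with the specific choice $\alpha_1 = \beta_1 = \frac{a+b}{2}$ and $\alpha_2 = \beta_2 = \frac{c+d}{2}$ and let the many terms collapse. Strictly speaking, Theorem \ref{z} is stated under $\alpha_1 < \beta_1$ and $\alpha_2 < \beta_2$, so I would either invoke a short continuity/limit argument (letting $\alpha_1,\beta_1 \to \frac{a+b}{2}$ and $\alpha_2,\beta_2 \to \frac{c+d}{2}$ — every term on both sides of (\ref{4}) is continuous in the four parameters) or simply observe that the underlying identity (\ref{10}) remains valid when $\alpha_i = \beta_i$, since the kernels $p$ and $q$ are still well-defined and the integration-by-parts computation (\ref{6})--(\ref{9}) goes through unchanged. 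This initial compatibility check is the only conceptual point in the argument; everything else is straightforward bookkeeping.

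With this choice, I would then read off the simplifications on the left-hand side of (\ref{4}) one term at a time. The factor $(\beta_1-\alpha_1)(\beta_2-\alpha_2)$ vanishes, killing the $f(\frac{a+b}{2},\frac{c+d}{2})$ term; both integrals along the midlines $\int f(t,\frac{c+d}{2})\,dt$ and $\int f(\frac{a+b}{2},s)\,ds$ drop out for the same reason; and $G(\alpha_1,\alpha_2,\beta_1,\beta_2)$ defined by (\ref{b}) vanishes identically because each of its two summands carries a factor of $(\beta_1-\alpha_1)$ or $(\beta_2-\alpha_2)$. The surviving term $H$ becomes, using $\alpha_1-a = b-\beta_1 = \frac{b-a}{2}$ and $\alpha_2-c = d-\beta_2 = \frac{d-c}{2}$,
\begin{equation*}
H = \frac{(b-a)(d-c)}{4}\bigl[f(a,c)+f(a,d)+f(b,c)+f(b,d)\bigr],
\end{equation*}
which is exactly the corner-sum in (\ref{16}). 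The two boundary line-integrals likewise collapse to $\frac{d-c}{2}\int_a^b[f(t,c)+f(t,d)]\,dt$ and $\frac{b-a}{2}\int_c^d[f(a,s)+f(b,s)]\,ds$, while the double integral $\iint f$ is unaltered. Matching these against (\ref{16}) shows the left-hand sides coincide.

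For the right-hand side, I would plug the same parameters into the two bracketed factors of (\ref{4}). The second summands $\frac{(a+b-2\alpha_1)^2+(a+b-2\beta_1)^2}{8}$ and $\frac{(c+d-2\alpha_2)^2+(c+d-2\beta_2)^2}{8}$ vanish, while the first summands reduce to $\frac{2\cdot((b-a)/2)^2}{2} = \frac{(b-a)^2}{4}$ and $\frac{(d-c)^2}{4}$ respectively. Their product gives $\frac{(b-a)^2(d-c)^2}{16}$, yielding the constant $\frac{1}{16}$ in (\ref{16}). Combining the simplified left- and right-hand sides completes the proof.

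Since no new estimation is performed, I do not anticipate any real obstacle beyond the minor technicality that the chosen parameter values lie on the boundary of the region in which Theorem \ref{z} is stated; once that is dispatched by continuity, the entire argument is algebraic substitution.
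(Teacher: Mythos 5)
Your proposal is correct and follows exactly the paper's route: the paper's own proof consists solely of substituting $\alpha_1=\beta_1=\frac{a+b}{2}$, $\alpha_2=\beta_2=\frac{c+d}{2}$ into (\ref{4}), and your term-by-term simplifications of $H$, $G$, and the two bracketed factors all check out. Your additional remark about the boundary case (the theorem being stated only for $\alpha_1<\beta_1$, $\alpha_2<\beta_2$, handled by continuity in the parameters or by noting the identity (\ref{10}) survives the degenerate choice) is a legitimate technical point the paper silently skips over, and it strengthens rather than alters the argument.
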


\begin{proof}
We choose $\alpha _{1}=\beta _{1}=\frac{a+b}{2},\ \alpha _{2}=\beta _{2}=%
\frac{c+d}{2}$ in (\ref{4}), then we see that (\ref{16}) holds.
\end{proof}

\end{document}